\documentclass[11pt]{article}
\usepackage{latexsym,amssymb,amsmath,amsthm,enumerate,geometry,float,cite}
\geometry{a4paper,left=2cm,right=2cm, top=2cm, bottom=2.2cm}
\newtheorem{theorem}{Theorem}
\newtheorem{proposition}[theorem]{Proposition}

\usepackage{lineno}
\usepackage{setspace}
\allowdisplaybreaks

\begin{document}
\onehalfspace

\title{Coloring by Pushing Vertices}
\author{Dieter Rautenbach \and Laurin Schwartze \and Florian Werner}
\date{}
\maketitle
\begin{center}
{\small 
Institute of Optimization and Operations Research, Ulm University, Ulm, Germany\\
\texttt{$\{$dieter.rautenbach,laurin.schwartze,florian.werner$\}$@uni-ulm.de}
}
\end{center}

\begin{abstract}
Let $G$ be a graph of order $n$, maximum degree at most $\Delta$, 
and no component of order $2$.
Inspired by the famous 1-2-3-conjecture,
Bensmail, Marcille, and Orenga define a {\it proper pushing scheme} of $G$
as a function $\rho:V(G)\to\mathbb{N}_0$
for which 
$$\sigma:V(G)\to\mathbb{N}_0:u\mapsto 
\left(1+\rho(u)\right)d_G(u)+\sum_{v\in N_G(u)}\rho(v)$$
is a vertex coloring, that is, 
adjacent vertices receive different values under $\sigma$.
They show the existence of a proper pushing scheme $\rho$
with $\max\{ \rho(u):u\in V(G)\}\leq \Delta^2$
and conjecture that this upper bound can be improved to $\Delta$.
We show their conjecture for cubic graphs and regular bipartite graphs.
Furthermore, we show the existence of a proper pushing scheme $\rho$
with $\sum_{u\in V(G)}\rho(u)\leq \left(2\Delta^2+\Delta\right)n/6$.\\[3mm]
{\bf Keywords:} 1-2-3-conjecture; pushing scheme
\end{abstract}

\section{Introduction}

We consider finite, simple, and undirected graphs and use standard terminology.
For a graph $G$, let $V(G)$ denote the vertex set of $G$
and let $E(G)$ denote the edge set of $G$.
For a vertex $u$ of $G$, let $N_G(u)$ denote the neighborhood of $u$ in $G$
and let $d_G(u)$ denote the degree of $u$ in $G$. 
Let $n(G)$, $m(G)$, and $\Delta(G)$ denote 
the order $|V(G)|$,
the size $|E(G)|$, and
the maximum degree $\max\{ d_G(u):u\in V(G)\}$ of $G$, respectively.
A graph is {\it nice} if it has no component of order two.
Let $\mathbb{N}$ be the set of positive integers and 
let $\mathbb{N}_0=\mathbb{N}\cup \{ 0\}$.
For a non-negative integer $k$, 
let $[k]$ be the set of all positive integers at most $k$ 
and let $[k]_0=\{0\}\cup [k]$.

Inspired by the famous 1-2-3-conjecture \cite{be,se},
posed in 2004 by Karo\'nski, \L uczak, and Thomason \cite{kaluto} and 
solved in 2024 by Keusch \cite{ke},
as well as by its wider context 
combining graph labeling and irregularity \cite{chjaleoerusa,ga},
Bensmail, Marcille, and Orenga \cite{bemaor} 
recently proposed coloring graphs by so-called pushing schemes.
The 1-2-3-conjecture states that the edges of every nice graph $G$
can be labeled with $1$, $2$, and $3$ in such a way that,
for every two adjacent vertices, 
the sums of the labels of the incident edges are distinct,
that is, there is a labeling $\ell:E(G)\to [3]$ such that 
\begin{eqnarray}\label{e-2}
\sum\limits_{w\in N_G(u)}\ell(uw)\not=\sum\limits_{w\in N_G(v)}\ell(vw)
\mbox{ for every edge $uv$ of $G$}.
\end{eqnarray}
Now, Bensmail, Marcille, and Orenga consider a setting 
where the label $\ell(e)$ of each edge $e=uv$
arises by {\it pushing} the incident vertices $u$ and $v$ a certain number of times.
More precisely, {\it pushing} $\rho(u)$ times the vertex $u$ 
and $\rho(v)$ times the vertex $v$ 
results in the edge label $\ell(e)=1+\rho(u)+\rho(v)$.
They call a function 
$$\rho:V(G)\to\mathbb{N}_0$$ 
assigning a {\it pushing value} $\rho(u)$ to every vertex $u$ of $G$
a {\it proper pushing scheme} of $G$
if (\ref{e-2}) holds for $\ell(e)=1+\rho(u)+\rho(v)$
or, equivalently, if
$$\sigma(u)\not=\sigma(v)
\mbox{ for every edge $uv$ of $G$,}$$
where 
\begin{eqnarray}\label{e-1}
\sigma:V(G)\to\mathbb{N}_0:u\mapsto\left(1+\rho(u)\right)d_G(u)+\sum\limits_{v\in N_G(u)}\rho(v),
\end{eqnarray}
that is, the function $\sigma:V(G)\to\mathbb{N}_0$ 
derived from $\rho$ is a vertex coloring of $G$.
Unless stated otherwise, 
whenever we consider functions $\rho$ and $\sigma$ 
defined on the vertex set of some graph,
they are related as in (\ref{e-1}). 

For a nice graph $G$, 
Bensmail et al.~\cite{bemaor} define
\begin{eqnarray*}
P^1(G)&=& \min\left\{\max\limits_{u\in V(G)}\rho(u):\mbox{ $\rho$ is a proper pushing scheme of $G$}\right\}\mbox{ and }\\
P^t(G)&=& \min\left\{\sum\limits_{u\in V(G)}\rho(u):\mbox{ $\rho$ is a proper pushing scheme of $G$}\right\}.
\end{eqnarray*}
The definitions immediately imply
\begin{eqnarray}\label{e0}
P^1(G)\leq P^t(G)\leq P^1(G)n(G).
\end{eqnarray}
Bensmail et al.~\cite{bemaor} show 
\begin{eqnarray}\label{e1}
P^1(G)\leq \Delta(G)^2
\end{eqnarray}
and conjecture  
\begin{eqnarray}\label{e2}
P^1(G)\leq \Delta(G).
\end{eqnarray}
In view of (\ref{e0}), the inequality (\ref{e1}) implies 
\begin{eqnarray}\label{e3}
P^t(G)\leq \Delta(G)^2n(G)
\end{eqnarray}
and the conjecture (\ref{e2}) motivates the weaker conjecture 
\begin{eqnarray}\label{e4}
P^t(G)\leq \Delta(G)n(G).
\end{eqnarray}
Bensmail et al.~\cite{bemaor} determine $P^1$
for complete graphs, 
complete bipartite graphs,
paths,
trees, 
cycles, and
cacti.
The also show that deciding, for a given nice graph $G$, whether $P^1(G)\leq 1$
is NP-complete, and they also establish the hardness of $P^t$.

\pagebreak

Postponing all proofs and some statements to the next section,
we now discuss our main contributions.
Our first main result verifies conjecture (\ref{e2}) for cubic graphs.

\begin{theorem}\label{theorem0}
If $G$ is a cubic graph, then $P^1(G)\leq 3$.
\end{theorem}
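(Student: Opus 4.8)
\noindent\emph{A reformulation.}
Since $G$ is cubic, we have $\sigma(u)=3\left(1+\rho(u)\right)+\sum_{v\in N_G(u)}\rho(v)$ for every $u\in V(G)$, and cancelling the terms $\rho(u)$ and $\rho(v)$ common to $\sigma(u)$ and $\sigma(v)$ shows that a function $\rho:V(G)\to\mathbb{N}_0$ is a proper pushing scheme of $G$ if and only if
\[2\rho(u)+\sum_{w\in N_G(u)\setminus\{v\}}\rho(w)\not=2\rho(v)+\sum_{w\in N_G(v)\setminus\{u\}}\rho(w)\quad\text{for every edge }uv\in E(G),\]
so it suffices to find such a $\rho$ with values in $[3]_0$. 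Two simple observations will be used. First, for a fixed edge $uv$ with $\rho$ prescribed everywhere except at $u$, the left hand side minus the right hand side is affine in $\rho(u)$ with slope $2$, hence nonzero for at least three of the four values in $[3]_0$. Second, changing $\rho$ at a single vertex $x$ by $\delta$ changes $\sigma(x)$ by $3\delta$ and $\sigma(w)$ by $\delta$ for each $w\in N_G(x)$, and leaves all other $\sigma$-values unchanged. Since distinct components are handled independently and a cubic graph has no component of order two, I may assume $G$ connected; the finitely many cubic graphs of small order are checked directly, and in particular $\rho\equiv(0,1,2,3)$ on $V(K_4)$ gives $\sigma\equiv(9,11,13,15)$.

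\noindent\emph{An inductive plan.}
The plan is to prove, by induction on the order, the more general statement that every nice graph $H$ with $\Delta(H)\leq 3$ has a proper pushing scheme with values in $[3]_0$ --- most likely in a form strengthened so as to retain some freedom at a prescribed vertex of degree at most $2$ (prescribing $\rho$ there, or which value $\sigma$ there must avoid), so that the reduction can be reassembled. To enter this statement from a cubic graph $G$, delete a suitably chosen edge $e=xy$; then $G-e$ is a nice graph of maximum degree $3$, possibly disconnected, with exactly $x$ and $y$ of degree $2$. Applying the inductive hypothesis to the components of $G-e$ yields a proper pushing scheme $\rho$ of $G-e$, and it remains to re-insert $e$: by the second observation this shifts $\sigma(x)$ and $\sigma(y)$, and, if $\rho$ is altered at $x$ or $y$, the $\sigma$-values of their neighbors, so one must choose $\rho(x),\rho(y)\in[3]_0$ so that no edge incident with $x$ or $y$ becomes monochromatic and no collision appears among the neighbors of $x$ or of $y$; by the first observation each single such edge forbids at most one value of $\rho(x)$, respectively $\rho(y)$.

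\noindent\emph{The main obstacle.}
The difficulty is precisely that re-inserting $e$ imposes several constraints on the pair $\left(\rho(x),\rho(y)\right)$ simultaneously while only four pushing values are allowed, and a naive count of forbidden values exceeds four; in particular one cannot just keep $\rho$ fixed away from $\{x,y\}$. I expect the remedy to combine a sufficiently strong inductive hypothesis, which propagates usable freedom through the degree-$2$ vertices that the reduction creates, with either (i) reducing along a carefully chosen local configuration --- a short cycle, a bridge, or a $2$-edge-cut --- so that reassembly affects only $O(1)$ of the $\sigma$-values, which are then repaired by a bounded cascade of the single-vertex recolorings of the second observation, or (ii) for $3$-edge-connected $G$, invoking Petersen's theorem to write $E(G)=M\cup F$ with $M$ a perfect matching and $F$ a disjoint union of cycles, and then picking $\rho$ cycle by cycle so that adjacent vertices receive distinct $\sigma$-values along both the cycle edges and the matching edges, the interplay of these two edge types being the delicate point. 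In all versions the real work is making the bookkeeping close within the values $\{0,1,2,3\}$.
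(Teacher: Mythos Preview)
Your proposal is not a proof but a plan, and you say so yourself: the reformulation and the two observations are correct, and the $K_4$ check is fine, but the heart of the matter---how to re-insert the deleted edge and repair all affected $\sigma$-values using only the four pushing values---is left as ``the real work,'' with only speculative remedies (i) and (ii). Neither remedy is carried out, and the naive count you give already shows that one cannot simply fix $\rho$ away from $\{x,y\}$: there are up to three edges at $x$ and three at $y$ to keep proper, plus edges between neighbours of $x$ (or of $y$) whose $\sigma$-difference changes when $\rho(x)$ (or $\rho(y)$) moves, and the Petersen decomposition in (ii) creates the same overcount along each matching edge. So as it stands there is no argument establishing the theorem.

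There is also a structural problem with the route you chose. Your induction forces you through non-regular subcubic graphs, and the paper explicitly remarks that this is \emph{harder} than the cubic case: the regularity is not incidental but is what makes the paper's construction work. The paper's proof is entirely different and non-inductive. It takes a Brooks $3$-colouring $X\cup Y\cup Z$ chosen so that $|Z|$ is maximum and then $|Y|$ is maximum, refines the classes according to which other classes their neighbours lie in, layers the remaining vertices by BFS-distance to a fixed set, and then assigns $\rho\in\{0,1,2,3\}$ to each piece by an explicit rule. Because every vertex has degree exactly $3$, the possible $\sigma$-values on each piece fall into a short list that can be written down and compared across all edges. No recursion or edge re-insertion is needed; the whole thing is a single global construction followed by a finite case check. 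If you want to salvage an inductive approach, you would need a genuinely new idea to control the cascade of recolourings within $\{0,1,2,3\}$, and the paper's comment suggests this is not straightforward.
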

We also establish conjecture (\ref{e2}) for regular bipartite graphs,
see Proposition \ref{proposition0} below.

The proof of (\ref{e1}) in \cite{bemaor} relies on a natural greedy algorithm,
which we explain in the next section.
Our second main result improves (\ref{e3}) 
and is based on the greedy algorithm and the probabilistic first-moment method.

\begin{theorem}\label{theorem1}
If $G$ is a nice graph of order $n$ and maximum degree at most $\Delta$,
then the greedy algorithm applied to some linear ordering of the vertices of $G$ 
yields a proper pushing scheme $\rho:V(G)\to\mathbb{N}_0$ with 
$$P^t(G)\leq \sum\limits_{u\in V(G)}\rho(u)\leq \sum\limits_{u\in V(G)}\frac{1}{6}d_G(u)(2d_G(u)+1)\leq \frac{n\Delta\left(2\Delta+1\right)}{6}.$$
\end{theorem}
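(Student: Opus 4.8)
The plan is to run the natural greedy algorithm underlying \eqref{e1} on a uniformly random linear ordering of $V(G)$ and to bound the resulting total pushing value by the probabilistic first-moment method.

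First I analyse the greedy algorithm on a fixed ordering $v_1,\dots,v_n$ of $V(G)$. Processing the vertices in this order, when we reach $v_i$ we have already fixed $\rho(v_1),\dots,\rho(v_{i-1})$, and we set $\rho(v_i)$ to be the smallest non-negative integer such that, under the assignment extending $\rho(v_1),\dots,\rho(v_i)$ by $\rho(v_k)=0$ for all $k>i$, no edge with both endpoints in $\{v_1,\dots,v_i\}$ is monochromatic under $\sigma$. Two elementary facts drive the single-step analysis: increasing $\rho(v_i)$ by one changes $\sigma(v_i)-\sigma(v_j)$ by exactly $d_G(v_i)-1$ for every neighbour $v_j$ of $v_i$, and changes $\sigma(v_a)-\sigma(v_b)$ by $+1$, $0$, or $-1$ for an edge $v_av_b$ of $G$ with $v_i\notin\{v_a,v_b\}$, the change being $0$ precisely when $v_i$ is adjacent to both or to neither of $v_a,v_b$. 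Consequently, if $d_G(v_i)\ge 2$ then each neighbour $v_j$ of $v_i$ with $j<i$ forbids at most one value of $\rho(v_i)$; if $d_G(v_i)=1$ then its unique neighbour $v_j$ forbids no value, because niceness of $G$ forces $d_G(v_j)\ge 2$ and then $\sigma(v_i)-\sigma(v_j)=(1+\rho(v_j))(1-d_G(v_j))-\sum_{w\in N_G(v_j)\setminus\{v_i\}}\rho(w)\le 1-d_G(v_j)<0$ for every choice of $\rho$; and each edge $v_av_b$ with $a,b<i$ and exactly one of $v_a,v_b$ adjacent to $v_i$ forbids at most one value of $\rho(v_i)$ (by the invariant below its two colours differ at $\rho(v_i)=0$, and the difference moves by $\pm 1$). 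So at step $i$ at most $b_i+t_i$ non-negative integers are forbidden, where $b_i=|\{j<i:v_jv_i\in E(G)\}|$ and $t_i$ is the number of edges $v_av_b$ with $a,b<i$ and $v_i$ adjacent to exactly one of $v_a,v_b$; in particular the greedy always succeeds and the chosen value satisfies $\rho(v_i)\le b_i+t_i$. An easy induction on $i$ shows that after step $i$ every edge inside $\{v_1,\dots,v_i\}$ is properly coloured by the current partial $\sigma$, so after step $n$ the map $\rho$ is a proper pushing scheme of $G$. (The crude estimate $b_i+t_i\le d_G(v_i)+d_G(v_i)(\Delta-1)\le\Delta^2$ recovers \eqref{e1}.)

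Now let $v_1,\dots,v_n$ be a uniformly random ordering. For every ordering, $\sum_{i=1}^n b_i=m(G)$, since each edge is counted once, at its later endpoint. For $\sum_{i=1}^n t_i$, observe that it equals the number of pairs $(v_i,e)$ with $e=xy\in E(G)$, $v_i\notin\{x,y\}$, $v_i$ adjacent to exactly one of $x,y$, and both $x$ and $y$ preceding $v_i$; for a uniformly random ordering this last event has probability $\tfrac13$, and for a fixed edge $xy$ the number of vertices outside $\{x,y\}$ adjacent to exactly one of $x,y$ is $d_G(x)+d_G(y)-2$ minus twice the number of common neighbours, hence at most $d_G(x)+d_G(y)-2$. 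Using $\sum_{xy\in E(G)}(d_G(x)+d_G(y))=\sum_{u\in V(G)}d_G(u)^2$ and $\sum_{u\in V(G)}d_G(u)=2m(G)$, linearity of expectation gives
$$\mathbb{E}\left[\sum_{u\in V(G)}\rho(u)\right]\le \mathbb{E}\left[\sum_{i=1}^n(b_i+t_i)\right]\le m(G)+\frac13\left(\sum_{u\in V(G)}d_G(u)^2-2m(G)\right)=\sum_{u\in V(G)}\frac16 d_G(u)\bigl(2d_G(u)+1\bigr).$$
Thus some ordering yields a proper pushing scheme $\rho$ with $\sum_{u}\rho(u)\le\sum_{u}\tfrac16 d_G(u)(2d_G(u)+1)$, which gives $P^t(G)\le\sum_{u}\rho(u)$ by the definition of $P^t$; and $\sum_{u}\tfrac16 d_G(u)(2d_G(u)+1)\le\tfrac16 n\Delta(2\Delta+1)$ since $t\mapsto\tfrac16 t(2t+1)$ is increasing on $[0,\infty)$ and $d_G(u)\le\Delta$ for all $u$.

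The step needing the most care is the tight bookkeeping in the third paragraph: estimating $t_i$ vertex by vertex only gives a bound of order $\Delta^2 n$, and the improvement to $\tfrac16 n\Delta(2\Delta+1)$ comes precisely from charging the disturbed edges globally — once per edge of $G$, with the factor $\tfrac13$ supplied by the random ordering — together with the small but essential use of the niceness hypothesis to rule out a stuck degree-one vertex in the existence part of the single-step analysis.
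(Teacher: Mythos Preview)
Your proof is correct and follows the same overall strategy as the paper: run the greedy algorithm on a uniformly random ordering, bound $\rho(v_i)$ by $b_i+t_i$ (the paper's $s_i^{(1)}+s_i^{(2)}$), and apply the first-moment method. The only difference is in how the expectation of $\sum_i t_i$ is computed: the paper charges each contributing path $v_i\text{--}v_j\text{--}v_k$ to its middle vertex $v_j$, obtaining $\sum_i s_i^{(2)}\le \sum_j\bigl(d_j^-d_j^++\binom{d_j^+}{2}\bigr)$, and then averages this over the uniform distribution of $d_j^-$ on $\{0,\dots,d_G(v_j)\}$; you instead charge each such path to the edge $v_jv_k$ and use directly that the outer vertex $v_i$ is last among three given vertices with probability $\tfrac13$. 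Both bookkeepings yield exactly $\sum_u\tfrac16 d_G(u)(2d_G(u)+1)$; yours is arguably the cleaner route to the same number.
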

Our computational experiments indicate that the greedy algorithm 
is much better 
than expressed by (\ref{e1}) or Theorem \ref{theorem1}
at least on average.
In fact, executing the algorithm on 
all $4$-regular graphs of order at most $15$ and 
considering few random orderings for each of these graphs,
the greedy algorithm is strong enough to establish conjecture 
(\ref{e2}) for these graphs.
The next section contains two further minor results
both pointing to possible improvements of 
(the analysis of) the greedy algorithm.

\section{Proofs and further results}

We immediately proceed to the:

\begin{proof}[Proof of Theorem \ref{theorem0}]
Clearly, we may assume that $G$ is connected.
If $G=K_4$, then $P^1(G)=3$ (cf. Observation 3.2 in \cite{bemaor}).
Hence, we may assume that $G\not=K_4$, and 
Brooks' Theorem implies that $G$ has chromatic number at most $3$.
Consider a three coloring of $G$ with (possibly empty) color classes $X$, $Y$, and $Z$,
where the coloring is chosen in such a way that $|Z|$ is as large as possible and,
subject to this condition, $|Y|$ is as large as possible.
It follows that every vertex in $X$ has a neighbor in $Y$ and a neighbor in $Z$,
and that every vertex in $Y$ has a neighbor in $Z$.
Let $Y'$ be the set of vertices in $Y$ that have a neighbor in $X$,
and let $Y''=Y\setminus Y'$.
Note that the vertices in $Y''$ have all their neighbors in $Z$.
Let $Z'$ be the set of vertices in $Z$ that have a neighbor in $X$ or $Y'$,
and let $Z''=Z\setminus Z'$.
Note that the vertices in $Z''$ have all their neighbors in $Y''$.
Let $Y_0$ be the set of vertices in $Y''$ that have no neighbor in $Z''$,
and let $Y'''=Y''\setminus Y_0$.
Altogether,
\begin{itemize}
\item every vertex in $X$ has a neighbor in $Y'$ and a neighbor in $Z'$
but no neighbor in $Y''\cup Z''$,
\item every vertex in $Y'$ has a neighbor in $X$ and a neighbor in $Z'$
but no neighbor in $Z''$,
\item every vertex in $Y_0$ has all its neighbors in $Z'$,
\item every vertex in $Y'''$ has all its neighbors in $Z$ and at least one neighbor in $Z''$,
\item every vertex in $Z'$ has a neighbor in $X$ or $Y'$, and
\item every vertex in $Z''$ has all its neighbors in $Y'''$.
\end{itemize}
For $i\in \mathbb{N}$, let $D_i=\left\{ u\in Y'''\cup Z'':{\rm dist}_G(u,Z')=i\right\}$.
Note that 
$\bigcup\limits_{i\in \mathbb{N}} D_{2i-1}=Y'''$,
$\bigcup\limits_{i\in \mathbb{N}} D_{2i}=Z''$, 
every vertex in $D_1$ has a neighbor in $D_2$, and
every vertex in $D_i$ for some $i\geq 2$ has a neighbor in $D_{i-1}$.

In order to complete the proof,
we show that 
$$\rho:V(G)\to [3]_0:
u\mapsto
\begin{cases}
0 & \text{, if }u\in X\cup \bigcup\limits_{i\in \mathbb{N}} D_{3i-2},\\
1 & \text{, if }u\in Y',\\[2mm]
2 & \text{, if }u\in \bigcup\limits_{i\in \mathbb{N}} D_{3i-1},\mbox{ and}\\
3 & \text{, if }u\in Y_0\cup Z'\cup \bigcup\limits_{i\in \mathbb{N}} D_{3i}
\end{cases}
$$
is a proper pushing scheme.

See Figure \ref{fig0} for an illustration.

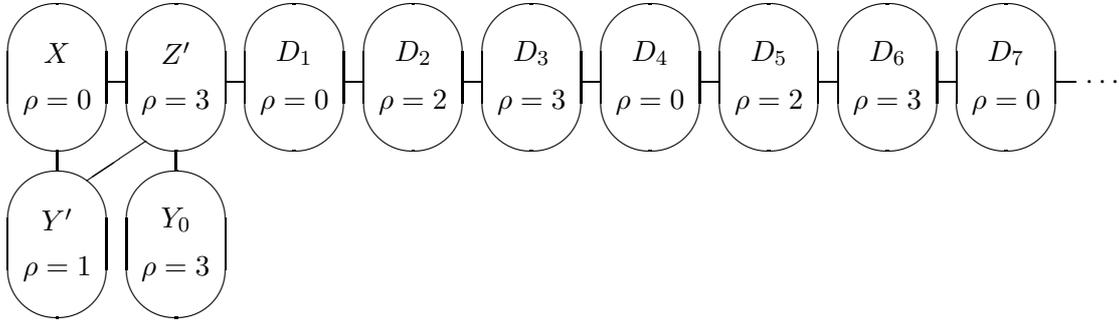
\begin{figure}[H]
\begin{center}
\unitlength 1.3mm 
\linethickness{0.4pt}
\ifx\plotpoint\undefined\newsavebox{\plotpoint}\fi 
\begin{picture}(111,32)(0,0)
\put(5,22){\makebox(0,0)[cc]{$\rho=0$}}
\put(5,5){\makebox(0,0)[cc]{$\rho=1$}}
\put(17,22){\makebox(0,0)[cc]{$\rho=3$}}
\put(17,5){\makebox(0,0)[cc]{$\rho=3$}}
\put(29,22){\makebox(0,0)[cc]{$\rho=0$}}
\put(41,22){\makebox(0,0)[cc]{$\rho=2$}}
\put(53,22){\makebox(0,0)[cc]{$\rho=3$}}
\put(89,22){\makebox(0,0)[cc]{$\rho=3$}}
\put(65,22){\makebox(0,0)[cc]{$\rho=0$}}
\put(101,22){\makebox(0,0)[cc]{$\rho=0$}}
\put(77,22){\makebox(0,0)[cc]{$\rho=2$}}
\put(5,27){\makebox(0,0)[cc]{$X$}}
\put(5,10){\makebox(0,0)[cc]{$Y'$}}
\put(17,27){\makebox(0,0)[cc]{$Z'$}}
\put(17,10){\makebox(0,0)[cc]{$Y_0$}}
\put(29,27){\makebox(0,0)[cc]{$D_1$}}
\put(41,27){\makebox(0,0)[cc]{$D_2$}}
\put(53,27){\makebox(0,0)[cc]{$D_3$}}
\put(89,27){\makebox(0,0)[cc]{$D_6$}}
\put(65,27){\makebox(0,0)[cc]{$D_4$}}
\put(101,27){\makebox(0,0)[cc]{$D_7$}}
\put(77,27){\makebox(0,0)[cc]{$D_5$}}
\put(5,24.5){\oval(10,15)[]}
\put(5,7.5){\oval(10,15)[]}
\put(17,24.5){\oval(10,15)[]}
\put(17,7.5){\oval(10,15)[]}
\put(29,24.5){\oval(10,15)[]}
\put(41,24.5){\oval(10,15)[]}
\put(53,24.5){\oval(10,15)[]}
\put(89,24.5){\oval(10,15)[]}
\put(65,24.5){\oval(10,15)[]}
\put(101,24.5){\oval(10,15)[]}
\put(77,24.5){\oval(10,15)[]}
\put(10,24){\line(1,0){2}}
\put(22,24){\line(1,0){2}}
\put(34,24){\line(1,0){2}}
\put(46,24){\line(1,0){2}}
\put(58,24){\line(1,0){2}}
\put(94,24){\line(1,0){2}}
\put(70,24){\line(1,0){2}}
\put(106,24){\line(1,0){2}}
\put(82,24){\line(1,0){2}}
\put(17,17){\line(0,-1){2}}
\put(5,17){\line(0,-1){2}}
\put(14,18){\line(-3,-2){6}}
\put(111,24){\makebox(0,0)[cc]{$\cdots$}}
\end{picture}
\end{center}
\caption{The partition of $V(G)$, the values of $\rho$, and the possible edges.}\label{fig0}
\end{figure}
In fact, the function $\sigma$ as in (\ref{e-1}) satisfies
$$\sigma(u)\in 
\begin{cases}
\{ 8,10\} & \text{, if }u\in X,\\
\{ 9,12\} & \text{, if }u\in Y',\\
\{ 21\} & \text{, if }u\in Y_0,\\
\{ 12,13,\ldots,19\} & \text{, if }u\in Z',\\
\{ 10,11\} & \text{, if }u\in D_1,\\
\{ 10,11,12\} & \text{, if }u\in  \bigcup\limits_{i\in \mathbb{N}} D_{3i+1},\\
\{ 9,12,15\} & \text{, if }u\in  \bigcup\limits_{i\in \mathbb{N}} D_{3i-1},\text{ and}\\
\{ 14,16,18\} & \text{, if }u\in  \bigcup\limits_{i\in \mathbb{N}} D_{3i}.
\end{cases}
$$
Furthermore, 
if $\sigma(u)=12$ for some vertex $u\in Z'$, then $u$ has no neighbor in $Y'$, and,
if $\sigma(u)=12$ for some vertex $u\in \bigcup\limits_{i\in \mathbb{N}} D_{3i+1}$,
then $u$ has all its neighbors in $\bigcup\limits_{i\in \mathbb{N}} D_{3i}$.
It follows that $\sigma$ is a vertex coloring of $G$, 
which completes the proof.
\end{proof}
The above proof strongly exploits that the considered graphs are regular.
A simple variation of this proof yields conjecture (\ref{e2}) for regular bipartite graphs. 
It seems much harder to show the conjecture for subcubic graphs.

\begin{proposition}\label{proposition0}
If $G$ is a $\Delta$-regular bipartite graph for some $\Delta\geq 4$, 
then $P^1(G)\leq \Delta$.
\end{proposition}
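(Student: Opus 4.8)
The plan is to run essentially the argument of Theorem~\ref{theorem0}, exploiting that the bipartiteness of $G$ lets one dispense with Brooks' Theorem and replace the period-three layering by a simpler two-periodic one. So: I would assume $G$ connected, fix the bipartition $V(G)=A\cup B$, choose a root $r\in B$, and let $L_0=\{r\},L_1=N_G(r),L_2,L_3,\dots$ be the distance classes of a breadth-first search from $r$. Because $G$ is bipartite, $L_{2i}\subseteq B$ and $L_{2i+1}\subseteq A$, every edge of $G$ joins two consecutive classes, and every vertex of $L_i$ with $i\ge 1$ has a neighbor in $L_{i-1}$. Call $u\in L_{2i+1}$ a \emph{dead end} if it has no neighbor in $L_{2i+2}$, so that then all of $N_G(u)$ lies in $L_{2i}$; note that $L_1$ contains no dead end, since $|L_0|=1<\Delta$.

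Next I would take $\rho:V(G)\to\{0,1,\Delta\}\subseteq[\Delta]_0$ defined by $\rho(v)=i\bmod 2$ for $v\in L_{2i}$, and, for $u\in L_{2i+1}$, $\rho(u)=\Delta$ if $u$ is a dead end and $\rho(u)=0$ otherwise. Writing $b_i=i\bmod 2$ and $\delta_v$ for the number of dead ends in $N_G(v)$, one computes $\sigma(v)=(1+b_i)\Delta+\Delta\delta_v$ for $v\in L_{2i}$; $\sigma(u)=\Delta+p\,b_i+q\,b_{i+1}$ for a non-dead-end $u\in L_{2i+1}$ having $p\ge 1$ neighbors in $L_{2i}$ and $q=\Delta-p\ge 1$ in $L_{2i+2}$; and $\sigma(u)=(1+\Delta)\Delta+\Delta b_i$ for a dead end $u\in L_{2i+1}$. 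Every edge of $G$ is of one of three kinds, and for each the difference of the two $\sigma$-values is nonzero: between a non-dead-end $u\in L_{2i+1}$ and a neighbor in $L_{2i}$ it equals $\pm q-\Delta\delta_v$, between $u$ and a neighbor in $L_{2i+2}$ it equals $\pm p-\Delta\delta_v$ (nonzero since $p,q\in\{1,\dots,\Delta-1\}$ are not divisible by $\Delta$), and between a dead end $u\in L_{2i+1}$ (necessarily $i\ge 1$) and a neighbor $v\in L_{2i}$ it equals $\Delta(\Delta-\delta_v)$ (nonzero because such a $v$ has a non-dead-end neighbor in $L_{2i-1}$, so $\delta_v\le\Delta-1$). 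Hence $\sigma$ is a vertex coloring, and since $\rho$ uses only the values $0,1,\Delta$, this gives $P^1(G)\le\Delta$; for disconnected $G$ one applies this to each component.

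I expect the only real subtlety to be the treatment of the dead ends: assigning them the large value $\Delta$ pushes their colour up to a multiple of $\Delta$, and one must make sure no neighbour shares that colour — which works exactly because a neighbour $v\in L_{2i}$ of a dead end is forced to have a non-dead-end neighbour one level back, capping $\delta_v$ below $\Delta$. (In fact this argument already goes through for $\Delta\ge 2$; the cases $\Delta\in\{2,3\}$ are anyway subsumed by the treatment of cycles in \cite{bemaor} and by Theorem~\ref{theorem0}, which is presumably why the statement is phrased for $\Delta\ge 4$.) The remaining effort is the routine but slightly fussy bookkeeping: checking that the three edge types exhaust $E(G)$, that $L_1$ has no dead ends, and that the displayed differences indeed never vanish.
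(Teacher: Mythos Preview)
Your argument is correct. The computations check out: for a non-dead-end $u\in L_{2i+1}$ the difference $\sigma(u)-\sigma(v)$ across an edge to $L_{2i}$ or $L_{2i+2}$ is $\pm q-\Delta\delta_v$ or $\mp p-\Delta\delta_v$ with $p,q\in\{1,\dots,\Delta-1\}$, hence not divisible by $\Delta$; and for a dead end the difference $\Delta(\Delta-\delta_v)$ is nonzero because the neighbour $v\in L_{2i}$ (with $i\ge 1$) has a BFS-predecessor in $L_{2i-1}$, which by definition cannot be a dead end. The observation that $L_1$ contains no dead ends (since $|L_0|=1<\Delta$) is exactly what is needed to make the last case go through.

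Your route, however, differs from the paper's. The paper keeps the period-three structure of Theorem~\ref{theorem0}: it partitions $V(G)$ into the three distance classes $C_j=\{u:\mathrm{dist}_G(u,r)\equiv j\bmod 3\}$ and assigns $\rho$ the constant values $\Delta,1,0$ on $C_0,C_1,C_2$ respectively, then verifies $\sigma(u)\ne\sigma(v)$ for the three edge types $C_0$--$C_1$, $C_1$--$C_2$, $C_2$--$C_0$ via divisibility and size arguments. This is a \emph{uniform} assignment with no exceptional vertices. Your scheme instead exploits the bipartition directly: you alternate $\rho\in\{0,1\}$ on the even BFS layers and keep odd layers at $0$, which forces you to introduce the dead-end correction (bumping terminal odd-layer vertices to $\Delta$) to avoid collisions. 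Both proofs end up using only the three values $0,1,\Delta$, and both work already for $\Delta\ge 2$. The paper's version is slightly cleaner in that it avoids any case distinction on the local structure; yours has the minor conceptual advantage that the need for the value $\Delta$ is localised to the dead ends, so one sees transparently why a two-value scheme almost suffices.
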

\begin{proof}
Clearly, we may assume that $G$ is connected.
Let $r$ be any vertex of $G$.
Let $D_i=\{ u\in V(G):{\rm dist}_G(u,r)=i\}$ for $i\in \mathbb{N}_0$.
Since each of the sets $D_i$ is contained in one of the two partite sets of $G$,
these sets are independent.
Furthermore, if $|i-j|\geq 2$, then there is no edge between $D_i$ and $D_j$.
Hence, the sets
$C_j=\{u\in V(G):{\rm dist}_G(u,r)\equiv j\mod 3\}$ for $j\in \{ 0,1,2\}$
are independent.

In order to complete the proof,
we show that 
$$\rho:V(G)\to [\Delta]_0:
u\mapsto
\begin{cases}
\Delta & \text{, if }u\in C_0,\\[2mm]
1 & \text{, if }u\in C_1,\mbox{ and}\\
0 & \text{, if }u\in C_2
\end{cases}
$$
is a proper pushing scheme.

Let $u$ be a vertex such that ${\rm dist}_G(u,r)\geq 1$.
Let $u\in C_k$.
Since
all neighbors of $u$ are in $C_{(k-1)\mod 3}\cup C_{(k+1)\mod 3}$
and $u$ has at least one neighbor in $C_{(k-1)\mod 3}$,
we obtain
$$
\sigma(u)\in
\begin{cases}
\big\{\Delta^2+\Delta+k_0:k_0\in [\Delta-1]_0\big\} & \text{, if }u\in C_0,\\[2mm]
\big\{2\Delta+k_1\Delta:k_1\in [\Delta]\big\} & \text{, if }u\in C_1,\mbox{ and}\\
\big\{\Delta+k_2+\Delta(\Delta-k_2):k_2\in [\Delta]\big\} & \text{, if }u\in C_2.
\end{cases}
$$
Note that $r\in C_0$ and $\sigma(r)=\Delta^2+
2\Delta$.
Furthermore, if $u$ is a neighbor of $r$, 
then $u\in C_1$ and $\sigma(u)=3\Delta\not=\sigma(r)$.
Now, let $uv$ be an edge of $G$ that is not incident with $r$.
By symmetry, we may assume that $u\in C_i$ and $v\in C_{(i+1)\mod 3}$.
If $i=0$, then 
$\sigma(u)\not\equiv 0\mod \Delta$,
because $u\in C_0$ has $k_0\geq 1$ neighbors in $C_1$, and 
$\sigma(v)\equiv 0\mod \Delta$.
If $i=1$, then $2\Delta<\sigma(u)\equiv 0\mod \Delta$
and either $\sigma(v)\not\equiv 0\mod \Delta$
(for $k_2\not=\Delta$)
or $\sigma(v)=2\Delta$ (for $k_2=\Delta$).
If $i=2$, then 
$\sigma(u)\leq \Delta^2+1$ 
and 
$\sigma(v)\geq \Delta^2+\Delta$.
In all three cases, we obtain $\sigma(u)\not=\sigma(v)$,
which completes the proof.
\end{proof}
For the proof of Theorem \ref{theorem1},
we explain the greedy algorithm from \cite{bemaor}.
In our exposition 
we slightly deviate from \cite{bemaor} 
in the way we handle vertices of degree $1$;
this deviation is not essential yet simpler:
Let $G$ be a nice graph of order $n$ and maximum degree at most $\Delta$.
Let $u_1,\ldots,u_n$ be a linear ordering of the vertices of $G$.
The greedy algorithm deterministically produces a sequence 
$\rho_0,\rho_1,\ldots,\rho_n$ of functions $\rho_i:V(G)\to\mathbb{N}_0$
such that, for every $i\in [k]_0$, the following two properties hold:
\begin{itemize}
\item Property $P_i^{(1)}$: $\rho_i(u_j)=0$ for every $j\in [n]\setminus [i]$.
\item Property $P_i^{(2)}$: For 
$$\sigma_i:V(G)\to\mathbb{N}_0:
u\mapsto \left(1+\rho_i(u)\right)d_G(u)+\sum\limits_{v\in N_G(u)}\rho_i(v)$$
there is no edge $u_ju_k$ with $j,k\in [i]$ and $\sigma_i(u_j)=\sigma_i(u_k)$.
\end{itemize}
By $P_0^{(1)}$, we have $\rho_0(u)=0$ for every vertex $u$ of $G$ 
and the condition $P_0^{(2)}$ is void.
By $P_n^{(2)}$, we have that $\rho_n$ is a proper pushing scheme.

Now, suppose that, for some $i\in [n]$, the function $\rho_{i-1}$ satisfies $P_{i-1}^{(1)}$ and $P_{i-1}^{(2)}$.
Note that, in particular, we have $\rho_{i-1}(u_i)=0$.
Let 
$$\rho_i(u_j)=
\begin{cases}
\rho_{i-1}(u_j)&\mbox{ for $j\in [n]\setminus \{ i\}$, and}\\
s &\mbox{ for $j=i$},
\end{cases}$$
where $s$ is the smallest non-negative integer such that the function $\rho_i$ satisfies $P_i^{(2)}$; note that $P_i^{(1)}$ holds by construction.
The key observation of Bensmail et al.~ \cite{bemaor} 
is that a valid choice for $s$ exists and that, in fact, it satisfies $s\leq \Delta^2$.
If $u_i$ has degree at most $1$, then $s=0$ is a valid choice, because $G$ is nice, 
that is, the functions $\rho_i$ and $\rho_{i-1}$ are the same.
If $u_i$ has degree at least $2$, then 
\begin{itemize}
\item for every $j\in [i-1]$ with $u_iu_j\in E(G)$, 
the condition ``$\sigma_i(u_i)\not=\sigma_i(u_j)$'' 
excludes at most one non-negative integer as a valid choice for $s$, and
\item for every two distinct $j,k\in [i-1]$ with $u_iu_j,u_ju_k\in E(G)$ and $u_iu_k\not\in E(G)$, 
the condition ``$\sigma_i(u_j)\not=\sigma_i(u_k)$'' 
excludes at most one non-negative integer as a valid choice for $s$.
\end{itemize}
Since all other relevant edges are not affected by changing the value at $u_i$,
this implies that 
\begin{eqnarray}\label{e5}
\rho_n(u_i)=\rho_i(u_i)\in \left[s_i^{(1)}+s_i^{(2)}\right]_0,
\end{eqnarray}
where 
\begin{itemize}
\item $s_i^{(1)}$ is the number of $j$ with $j\in [i-1]$ and $u_iu_j\in E(G)$, and
\item $s_i^{(2)}$ is the number of $(j,k)$ with $j,k\in [i-1]$ and $u_iu_j,u_ju_k\in E(G)$ 
and $u_iu_k\not\in E(G)$.
\end{itemize}
Note that the set $\left[s_i^{(1)}+s_i^{(2)}\right]_0$ contains $1+s_i^{(1)}+s_i^{(2)}$ elements,
which is at least one more than the number of excluded valid choices.
Since 
$s_i^{(1)}\leq d_G(u_i)\leq \Delta$
and 
$$s_i^{(2)}\leq \sum\limits_{v\in N_G(u_i)}(d_G(v)-1)\leq \Delta(\Delta-1),$$
we have $s_i^{(1)}+s_i^{(2)}\leq \Delta^2$, which implies (\ref{e1}).

We are now in a position to show our second main result.

\begin{proof}[Proof of Theorem \ref{theorem1}]
Consider the application of the above greedy algorithm to a linear ordering $u_1,\ldots,u_n$ 
of the vertices of $G$ 
that is chosen uniformly at random.
For $i\in [n]$, let $d_i^-$ be the number of neighbors $u_j$ of $u_i$ with $j<i$, and let $d_i^+=d_G(u_i)-d_i^-$.
By the definitions of $s_i^{(1)}$ and $s_i^{(2)}$, double-counting implies
\begin{eqnarray*}
\sum\limits_{i=1}^ns_i^{(1)} &=& \sum\limits_{j=1}^nd_j^+\mbox{ and }\\
\sum\limits_{i=1}^ns_i^{(2)} &\leq & \sum\limits_{j=1}^n\left(d_j^-d_j^++{d_j^+\choose 2}\right).
\end{eqnarray*}
Using (\ref{e5}), linearity of expectation, and the uniform random choice of the linear ordering, we obtain
\begin{eqnarray*}
P^t(G) 
& \leq & \mathbb{E}\left[\sum\limits_{i=1}^n\left(s_i^{(1)}+s_i^{(2)}\right)\right]\\
& \leq & \mathbb{E}\left[\sum\limits_{j=1}^n\left(d_j^++ d_j^-d_j^++{d_j^+\choose 2}\right)\right]\\
& = & \sum\limits_{j=1}^n\mathbb{E}\left[d_j^++ d_j^-d_j^++{d_j^+\choose 2}\right]\\
& = & 
\sum\limits_{j=1}^n
\left(\frac{1}{d_G(u_j)+1}
\sum\limits_{d^-=0}^{d_G(u_j)}\left(\Big(d_G(u_j)-d^-\Big)+d^-\Big(d_G(u_j)-d^-\Big)+{d_G(u_j)-d^-\choose 2}\right)\right)\\
& = & 
\sum\limits_{j=1}^n\frac{1}{6}d_G(u_j)(2d_G(u_j)+1),
\end{eqnarray*}
which completes the proof.
\end{proof}
For a cubic graph $G$ of order $n$, Theorem \ref{theorem1} implies $P^t(G)\leq 3.5n$.
Executing the greedy algorithm on all cubic graphs $G$ of order $n\leq 20$
yields proper pushing schemes $\rho:V(G)\to\mathbb{N}_0$
where $\sum_{u\in V(G)}\rho(u)$ behaves roughly like $5n/8$ on average,
that is, much better than guaranteed by Theorem \ref{theorem1}.
Similarly, executing the greedy algorithm on all $4$-regular graphs $G$ 
of order $n\leq 15$
yields proper pushing schemes $\rho$
where $\sum_{u\in V(G)}\rho(u)$ behaves roughly like $2n/3$ on average.

The following two propositions point to possible improvements 
of (the analysis of) the greedy algorithm.
Recall that the girth of a graph is the length of a shortest cycle.

\begin{proposition}\label{proposition1}
If $G$ is a cubic graph of girth at least $5$, 
then the greedy algorithm applied to some linear ordering of the vertices of $G$ 
yields a proper pushing scheme $\rho:V(G)\to\mathbb{N}_0$ with 
$$\sum\limits_{u\in V(G)}\rho(u)\leq \left(3.5-23/840\right)n.$$
\end{proposition}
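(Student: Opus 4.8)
The plan is to follow the proof of Theorem~\ref{theorem1}: run the greedy algorithm on a uniformly random linear ordering $u_1,\dots,u_n$ of $V(G)$, bound $\mathbb{E}\big[\sum_{u\in V(G)}\rho_n(u)\big]$ from above, and invoke the first--moment principle to obtain an ordering that is at least as good. Since $\tfrac16 d_G(u)\big(2d_G(u)+1\big)=3.5$ for every vertex of a cubic graph, the bound of Theorem~\ref{theorem1} reads exactly $3.5n$, so the entire task is to gain an additional additive term of $\tfrac{23}{840}n$ by analysing the greedy step more finely than the crude estimate $\rho_n(u_i)\le s_i^{(1)}+s_i^{(2)}$ permits, using that $G$ has girth at least $5$.

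First I would record the local picture. Since $G$ is cubic of girth at least $5$, for every vertex $u$ the ten vertices at distance at most $2$ from $u$ — namely $u$, its three neighbours, and the six second neighbours — are pairwise distinct (a coincidence among them would create a cycle of length $3$ or $4$), and any two neighbours of $u$ are non-adjacent and have no common neighbour other than $u$. Consequently, when the greedy processes $u=u_i$, the non-edge condition ``$u_iu_k\notin E(G)$'' appearing in the definition of $s_i^{(2)}$ is automatic; the constraints that must be satisfied involve only the $\sigma_{i-1}$-values of these (at most) ten vertices; and $s_i^{(1)}$ and $s_i^{(2)}$ are simply the number of back-neighbours of $u_i$ and the number of paths $u_i\mbox{--}a\mbox{--}x$ with $a$ a back-neighbour of $u_i$ and $x$ a back-vertex. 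Writing everything in terms of $\rho_{i-1}$ (recall $\rho_{i-1}(u_i)=0$) and using $d_G(u_i)=3$: the constraint coming from a back-neighbour $a$ of $u_i$ forbids the single value $\tfrac12\big(\sigma_{i-1}(a)-\sigma_{i-1}(u_i)\big)$, which is a genuine obstruction only when $\sigma_{i-1}(a)-\sigma_{i-1}(u_i)$ is a nonnegative even integer, and the constraint coming from a cherry $u_i\mbox{--}a\mbox{--}x$ forbids the single value $\sigma_{i-1}(x)-\sigma_{i-1}(a)$, which is genuine only when $\sigma_{i-1}(x)\ge\sigma_{i-1}(a)$.

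Let the \emph{saving at $u_i$} be $s_i^{(1)}+s_i^{(2)}$ minus the number of distinct nonnegative integers genuinely forbidden; then $\rho_n(u_i)\le s_i^{(1)}+s_i^{(2)}-(\text{saving at }u_i)$, and the saving is at least the number of constraints whose forbidden value is not a nonnegative integer, plus the number of coincidences among the genuine forbidden values. I would then lower-bound $\mathbb{E}[\text{saving at }u_i]$ by a positive constant, summing which over all vertices yields the claimed $\tfrac{23}{840}n$. The mechanism I would try to exploit is that along an edge $ax$ of a cherry $u_i\mbox{--}a\mbox{--}x$ contributing to $s_i^{(2)}$, both $a$ and $x$ precede $u_i$, so property $P_{i-1}^{(2)}$ already forces $\sigma_{i-1}(a)\neq\sigma_{i-1}(x)$; hence exactly one of the two ``orientations'' of such a cherry is genuine, two cherries with the same middle $a$ but distinct endpoints $x,x'$ produce the same forbidden value exactly when $\sigma_{i-1}(x)=\sigma_{i-1}(x')$ (which is permitted, since $x,x'$ are non-adjacent), and a back-neighbour constraint forbids something only under a parity condition on the $\sigma_{i-1}$-values. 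Conditioning on the relative order of $u_i$ with its neighbours and second neighbours, one should be able to show that in the configurations making $s_i^{(1)}+s_i^{(2)}$ large, a saving of at least $1$ occurs with probability bounded away from $0$. Linearity of expectation then gives $\mathbb{E}\big[\sum_u\rho_n(u)\big]\le\big(3.5-\tfrac{23}{840}\big)n$, and any ordering attaining at most the mean proves the proposition.

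The main obstacle is precisely this probabilistic heart. The values $\rho_{i-1}(\cdot)$ on the already-processed vertices are produced deterministically by the algorithm and cannot be treated as free randomness, so parities and relative sizes of the $\sigma_{i-1}$-values are not independent fair coins; the argument has to locate an \emph{unconditional} combinatorial obstruction — one valid no matter what the greedy did earlier — or else track enough of the algorithm's state to control the relevant conditional expectation. Pushing the constant all the way to $\tfrac{23}{840}$ is then a somewhat delicate optimisation over the possible local orderings, which is the natural origin of the unusual denominator.
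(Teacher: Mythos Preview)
Your setup is correct and matches the paper: random ordering, the ten-vertex local picture from girth $\ge 5$, and the observation that Theorem~\ref{theorem1} already gives $3.5n$, so the task is to shave off $\tfrac{23}{840}$ per vertex in expectation. But the heart of the argument is missing, and the direct mechanism you propose --- bounding $\mathbb{E}[\text{saving at }u_i]$ via non-genuine or coinciding forbidden values --- does not reach $\tfrac{23}{840}$ without an additional idea you have not supplied.

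Concretely, consider the configuration the paper calls $S_1$: exactly one neighbour $v=u_j$ of $u=u_i$ precedes $u$, and both neighbours of $v$ other than $u$ precede $v$. Here $s_i=3$. If it happens that $\rho_{i-1}(v)=\rho_{i-1}(v_{j,1})=\rho_{i-1}(v_{j,2})=0$ (equivalently $\sigma_{i-1}(v)=3$), then the three forbidden values at $u_i$ are $0$, $\sigma_{i-1}(v_{j,1})-3$, and $\sigma_{i-1}(v_{j,2})-3$; these can perfectly well be $0,1,2$, forcing $g_i=3$ and giving \emph{zero} saving at $u_i$. So on $S_1$ your ``saving at $u_i$'' can vanish, and the obstacle you yourself name --- that $\sigma_{i-1}$-values are deterministic outputs of the algorithm, not fair coins --- bites exactly here.

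The paper resolves this with a \emph{redistribution} trick you did not anticipate: it does not bound the saving $s_i-g_i$ at $u_i$ alone, but defines auxiliary quantities $t_i$ by crediting, for every $j$, one quarter of $s_j-g_j$ to $t_j$ and three quarters to $t_{j'}$, where $u_{j'}$ is the \emph{last} neighbour of $u_j$ in the ordering (all of it to $t_j$ if no such neighbour exists). Then $\sum g_i=\sum s_i-\sum t_i$, and one lower-bounds $\mathbb{E}[t_i]$. The point is that in $S_1$ (and the analogous $S_2$) the vertex $u$ is automatically the last neighbour of $v$, so any saving at $v$ flows to $t_i$. This enables an \emph{unconditional} dichotomy: either $\sigma_{i-1}(v)>3$, in which case one checks $g_i=0$ and $t_i\ge \tfrac14(s_i-g_i)=\tfrac34$; or $\sigma_{i-1}(v)=3$, which forces $g_j=0$ while $s_j\ge 2$, whence $t_i\ge \tfrac34(s_j-g_j)\ge \tfrac32$. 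Together with a third configuration $S_3$ (all three neighbours of $u$ before $u$, all six second neighbours after) where one computes $g_i=1$ directly, the probabilities $\tfrac{1}{60},\tfrac{1}{60},\tfrac{1}{840}$ of $S_1,S_2,S_3$ yield $\mathbb{E}[t_i]\ge \tfrac34\cdot\tfrac{2}{60}+2\cdot\tfrac{1}{840}=\tfrac{23}{840}$.

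Your approach would capture the $S_3$ contribution of $\tfrac{2}{840}$, but the remaining $\tfrac{21}{840}$ from $S_1\cup S_2$ genuinely requires looking at savings at \emph{neighbours} of $u_i$, not just at $u_i$ itself; the redistribution is the missing idea.
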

\begin{proof}
As in the proof of Theorem \ref{theorem1}, 
consider the application of the greedy algorithm to a linear ordering $u_1,\ldots,u_n$ 
of the vertices of $G$ 
that is chosen uniformly at random.
For $i\in [n]$, let $g_i$ be the final pushing value assigned to $u_i$ 
by the greedy algorithm, that is, $g_i=\rho_i(u_i)=\rho_n(u_i)$.
Theorem \ref{theorem1} relies on the estimate $g_i\leq s_i:=s_i^{(1)}+s_i^{(2)}$.
In order to quantify the improvement of this estimate, 
we define $t_1,\ldots,t_n$ as follows:
\begin{quote}
{\it Initialize $t_i$ as $0$ for every $i\in [n]$.
For $i$ from $1$ up to $n$ proceed as follows:
If $u_i$ has no neighbor $u_j$ with $j>i$, then increase $t_i$ by $s_i-g_i$.
Otherwise, let $u_j$ be the neighbor of $u_i$ with largest index $j$,
increase $t_i$ by $(s_i-g_i)/4$ and $t_j$ by $3(s_i-g_i)/4$.}
\end{quote}
Clearly, we have
$\sum\limits_{i=1}^n g_i
=\sum\limits_{i=1}^n s_i-\sum\limits_{i=1}^n t_i$.
By linearity of expectation, we have
\begin{eqnarray}\label{e5b}
\mathbb{E}\left[\sum\limits_{i=1}^n g_i\right]
&=&
\mathbb{E}\left[\sum\limits_{i=1}^n s_i\right]
-
\mathbb{E}\left[\sum\limits_{i=1}^n t_i\right]
\leq 3.5n-\sum\limits_{i=1}^n \mathbb{E}\left[t_i\right].
\end{eqnarray}
The assumptions that $G$ is cubic and of girth at least $5$
simplify the estimation of $\mathbb{E} \left[ t_i\right]$.

Let $i\in[n]$.
Let $u=u_i$,
let $N_G(u)=\{ v_1,v_2,v_3\}$, and, for $i\in [3]$, 
let $N_G(v_i)=\{ u,v_{i,1},v_{i,2}\}$.
Since $G$ has girth at least $5$, the set $U=\{ u,v_1,v_2,v_3,v_{1,1},v_{1,2},v_{2,1},v_{2,2},v_{3,1},v_{3,2}\}$
contains exactly $10$ distinct vertices.
Restricting the linear ordering $u_1,\ldots,u_n$ to $U$ yields a permutation $\pi$ from $S_U$
and, since the linear ordering is chosen uniformly at random,
each of the $10!$ permutations in $S_U$ is equally likely to be the restriction.
Note that $s_i$ is completely determined by $\pi$.

There is a set $S_1$ of exactly $\frac{3\cdot 2!\cdot 2!}{6!}\cdot 10!$ permutations in $S_U$,
corresponding to linear orderings of $U$, in which 
\begin{itemize}
\item exactly one neighbor $v$ of $u$ comes before $u$ and 
\item the two neighbors of $v$ that are distinct from $u$ both come before $v$. 
\end{itemize}
There is a set $S_2$ of exactly $\frac{3\cdot 2!\cdot 2!}{6!}\cdot 10!$ permutations in $S_U$ 
in which 
\begin{itemize}
\item exactly one neighbor $v$ of $u$ comes before $u$,
\item exactly one of the two neighbors of $v$ that are distinct from $u$ comes before $v$, and
\item the other neighbor of $v$ that is distinct from $u$ comes after $v$ and before $u$. 
\end{itemize}
There is a set $S_3$ of exactly $3!\cdot 6!$ permutations in $S_U$ 
in which 
\begin{itemize}
\item all three neighbors $v_1$, $v_2$, and $v_3$ of $u$ come before $u$ and 
\item all six vertices $v_{1,1}$, $v_{1,2}$, $v_{2,1}$, $v_{2,2}$, $v_{3,1}$, and $v_{3,2}$ come after $u$. 
\end{itemize}
If $\pi\in S_U\setminus \left(S_1\cup S_2\cup S_3\right)$, then $t_i\geq 0$.
Now, let $\pi\in S_1$.
Let $u_j$ be the unique neighbor of $u_i$ with $j<i$.
Note that $s_i=3$ and $s_j\geq 2$.
If $\sigma_{i-1}(u_j)>3$, then $g_i=0$, which implies $t_i\geq (s_i-g_i)/4=3/4$.
If $\sigma_{i-1}(u_j)=3$, then $g_j=0$, which implies $t_i\geq 3(s_j-g_j)/4\geq 3/2$.
Altogether, we obtain $t_i\geq 3/4$.
Next, let $\pi\in S_2$.
Let $u_j$ be the unique neighbor of $u_i$ with $j<i$.
Note that $s_i=3$ and $s_j\geq 1$.
If $\sigma_{i-1}(u_j)>3$, then $g_i=0$, which implies $t_i\geq (s_i-g_i)/4=3/4$.
If $\sigma_{i-1}(u_j)=3$, then $g_j=0$, which implies $t_i\geq 3(s_j-g_j)/4\geq 3/4$.
Altogether, we obtain $t_i\geq 3/4$.
Finally, let $\pi\in S_3$.
In this case $s_i=3$ and $g_i=1$,
which implies $t_i\geq s_i-g_i=2$.

By the uniform random choice of the linear ordering, we obtain
\begin{eqnarray*}
\mathbb{E} \left[ t_i\right] 
& \geq & \frac{1}{10!}\left(\frac{3}{4}|S_1|+\frac{3}{4}|S_2|+2|S_3|\right)\\
&\geq &\frac{1}{10!}\left(\frac{3}{4}\cdot \frac{3\cdot 2!\cdot 2!}{6!}\cdot 10!+\frac{3}{4}\cdot \frac{3\cdot 2!\cdot 2!}{6!}\cdot 10!+2\cdot 3!\cdot 6!\right)\\
&=& 23/840,
\end{eqnarray*}
and (\ref{e5b}) completes the proof.
\end{proof}
It is possible to generalize
Proposition \ref{proposition1} to larger regularities 
and also to graphs of any girth.

Provided that $G$ is nice, connected, 
and of maximum degree at most $\Delta$ but not $\Delta$-regular,
Bensmail et al.~\cite{bemaor} use a breadth-first search argument 
to show that $P^1(G)\leq \Delta^2-\Delta$.
Our final result also builds on the greedy algorithm.

\begin{proposition}\label{proposition2}
If $G$ is a nice graph of maximum degree at most $\Delta$, then $P^1(G)\leq \Delta^2-1$.
\end{proposition}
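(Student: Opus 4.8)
The plan is to run the greedy algorithm on a cleverly chosen vertex ordering, after a few easy reductions. Since distinct components of $G$ do not interact, I may assume that $G$ is connected, and, since a nice graph of maximum degree at most $1$ is edgeless, I may assume that $\Delta\ge 2$. If $G$ is $\Delta'$-regular with $\Delta'<\Delta$, then $(\ref{e1})$ already gives $P^1(G)\le(\Delta')^2\le(\Delta-1)^2\le\Delta^2-1$, and if $G$ is connected but not $\Delta$-regular, then the breadth-first search bound $P^1(G)\le\Delta^2-\Delta$ of Bensmail et al.~\cite{bemaor} gives $P^1(G)\le\Delta^2-1$. Hence it suffices to treat connected $\Delta$-regular graphs with $\Delta\ge 2$.

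I would then fix a spanning tree $T$ of $G$, root it at some vertex $r$, and order the vertices $u_1,\dots,u_n$ so that every vertex precedes its parent in $T$; then $u_n=r$, and every $u_i$ with $i<n$ has a later neighbor, namely its $T$-parent. Running the greedy algorithm on this ordering, every $u_i$ with $i<n$ has at most $\Delta-1$ back-neighbors, each having at most $\Delta-1$ neighbors other than $u_i$, so $s_i^{(1)}\le\Delta-1$, $s_i^{(2)}\le(\Delta-1)^2$, and $(\ref{e5})$ yields $\rho_n(u_i)\le\Delta^2-\Delta\le\Delta^2-1$. So only the root matters. For $u_n=r$ all $\Delta$ neighbors precede it, so $s_n^{(1)}=\Delta$, and since each neighbor of $r$ contributes its $\Delta-1$ neighbors other than $r$ while each edge joining two neighbors of $r$ removes two such contributions, $s_n^{(2)}=\Delta(\Delta-1)-2t_r$, where $t_r$ is the number of edges of $G$ joining two neighbors of $r$. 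Thus, if $G$ has a triangle, I would pick $r$ on it, so that $t_r\ge 1$ and $s_n^{(1)}+s_n^{(2)}\le\Delta^2-2$, which by $(\ref{e5})$ finishes the proof. It remains to handle triangle-free $\Delta$-regular graphs, where $s_n^{(1)}+s_n^{(2)}=\Delta^2$ with no slack.

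In the triangle-free case I would analyse the forbidden values for $s=\rho_n(r)$ directly. With $A=\sum_{v\in N_G(r)}\rho_{n-1}(v)$, the condition at an edge $rv$ (with $v\in N_G(r)$) forbids $s=\big(\sigma_{n-1}(v)-\Delta-A\big)/(\Delta-1)$, and the condition at a path $r,v,w$ (with $w\in N_G(v)\setminus(N_G(r)\cup\{r\})$, which is the general situation since $G$ is triangle-free) forbids $s=\sigma_{n-1}(w)-\sigma_{n-1}(v)$; there are at most $\Delta+\Delta(\Delta-1)=\Delta^2$ such forbidden values, so it suffices to exhibit two that coincide, or one that is negative, or one that is at least $\Delta^2$. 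Several structural facts are available: the value $0$ is never forbidden by a path-condition, since $\sigma_{n-1}$ properly colors $G-r$, so if $\rho_n(r)$ were forced up to $\Delta^2$ then some edge $rv$ would satisfy $\sigma_{n-1}(v)=\Delta+A$; the sets $N_G(r)$ and $N_G(v)\setminus\{r\}$ are independent, so a repeated $\sigma_{n-1}$-value inside one of them merges two forbidden values; a $\sigma_{n-1}$-decreasing edge from $N_G(r)$ into its second neighborhood yields a negative forbidden value; and for $\Delta\ge 3$ an edge $rv$ forbids an integer only when $\sigma_{n-1}(v)\equiv\Delta+A\pmod{\Delta-1}$, so unless all neighbors of $r$ are congruent modulo $\Delta-1$ fewer than $\Delta$ edge-conditions are active. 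I would combine these to exclude the one surviving ``all $\sigma_{n-1}$-values distinct, all relevant second-neighborhood edges increasing, all congruences met'' configuration, if necessary by choosing $r$ more carefully (for instance on a shortest cycle, or with two neighbors sharing a common neighbor of extremal $\sigma_{n-1}$-value) or by falling back on Proposition~\ref{proposition0} and Theorem~\ref{theorem0} in the bipartite and cubic subcases. Ruling out this last configuration is the step I expect to be the main obstacle; the reductions and the treatment of the non-root vertices are routine given the greedy machinery already developed.
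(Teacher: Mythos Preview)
Your reductions and the treatment of the non-root vertices match the paper, but the triangle-free $\Delta$-regular case is a genuine gap, as you yourself flag. The ``structural facts'' you list do not combine into a proof: nothing you have written rules out the configuration in which the $\Delta$ edge-conditions and the $\Delta(\Delta-1)$ path-conditions produce exactly the values $0,1,\ldots,\Delta^2-1$, and your suggested fallbacks (Proposition~\ref{proposition0} and Theorem~\ref{theorem0}) cover only the bipartite and cubic subcases, leaving, for example, non-bipartite triangle-free $4$-regular graphs untouched. Choosing $r$ ``more carefully'' is not obviously enough either, since in a triangle-free $\Delta$-regular graph every choice of root gives $s_n^{(1)}+s_n^{(2)}=\Delta^2$ with no slack.

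The paper closes exactly this gap with a perturbation argument that you are missing. Assuming every $s\in[\Delta^2-1]_0$ is forbidden, each of the $\Delta^2$ edges incident with $N_G(r)$ forbids a unique value, so these forbidden values form the set $[\Delta^2-1]_0$. Now replace $\rho_{n-1}(v_1)$ by a second valid value $t'\neq t$ (such a $t'$ exists in $[\Delta^2-\Delta+1]_0$ by the same counting that bounded the non-root vertices). If the resulting scheme admits a value $s\le\Delta^2-1$ at $r$, you are done; otherwise the new forbidden values again form $[\Delta^2-1]_0$, hence have the same sum. But one can compute the shift $s'_e-s_e$ for each edge $e$ explicitly in terms of $\alpha=t'-t$: the edges $rv_i$ contribute total shift $0$, while the edges $v_1v_{1,j}$ together with the other edges ending at $v_{1,j}$ contribute $-(\Delta-k_j)\alpha$ where $k_j$ is the number of neighbours $v_{1,j}$ has in $N_G(r)$. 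Provided $G\neq K_{\Delta,\Delta}$ (which is handled separately since $P^1(K_{\Delta,\Delta})=1$), some $k_j<\Delta$, so the total shift is nonzero and the two sums cannot agree. This global sum comparison is the missing idea; your local analysis of individual forbidden values does not see it.
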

\begin{proof}
Clearly, we may assume that $G$ is connected and $\Delta$-regular.
Since $P^1(K_{\Delta,\Delta})=1$ (cf. Theorem 3.1 in \cite{bemaor}),
we may assume that $G$ is not $K_{\Delta,\Delta}$.
Let $u_1,\ldots,u_n$ be a reverse breadth-first search ordering, that is,
in particular, for every $i\in [n-1]$, the vertex $u_i$ has a neighbor $u_j$ with $j>i$.
This implies that the function $\rho_{n-1}:V(G)\to \mathbb{N}_0$ 
that is produced by the greedy algorithm 
and satisfies $P_{n-1}^{(1)}$ and $P_{n-1}^{(2)}$
only uses values in $\left[\Delta^2-\Delta\right]_0$.
In particular, only the final vertex $u_n$ may require $\Delta^2$ pushes.

Let $u=u_n$,
let $N_G(u)=\{ v_1,\ldots,v_{\Delta}\}$, and, for every $i\in [\Delta]$,
let $N_G(v_i)=\{ u,v_{i,1},\ldots,v_{i,\Delta-1}\}$.
Since $G$ is not $K_{\Delta,\Delta}$,
we may assume that $v_{1,1}$ is not adjacent to all vertices in $N_G(u)$.
In view of the desired statement, 
we may assume that $\Delta^2$ is the only valid choice for $\rho_n(u_n)$
within the set $[\Delta^2]_0$.
More presicely, for every $s$ in $[\Delta^2-1]_0$,
there is an edge $e(s)$ incident with a neighbor of $u$, say $e(s)=xy$,
such that setting $\rho_n(u)$ to $s$ results in $\sigma_n(x)=\sigma_n(y)$.
Note that there are at most $\Delta^2$ edges incident with neighbors of $u$.
It follows that $u$ does not lie in a triangle,
that the edge $e(s)$ is unique for every $s\in [\Delta^2-1]_0$, and 
that $\{ e(s):s\in [\Delta^2-1]_0\}$ is the set of all $\Delta^2$ distinct edges 
that are incident with neighbors of $u$.
For $i\in [\Delta]$ and $j\in [\Delta-1]$, 
let $s_i$ be such that $e(s_i)=uv_i$
and 
let $s_{i,j}$ be such that $e(s_{i,j})=v_iv_{i,j}$.

The set $[\Delta^2-\Delta]_0\cup \{ \Delta^2-\Delta+1\}$ 
contains a value $t'$ distinct from $t=\rho_{n-1}(v_1)$ 
such that changing within $\rho_{n-1}$ the value of $v_1$ from $t$ to $t'$
yields a function $\rho'_{n-1}$ 
that satisfies $P_{n-1}^{(1)}$ and $P_{n-1}^{(2)}$.
In view of the desired statement, 
we may again assume that, for every $s'$ in $[\Delta^2-1]_0$,
there is an edge $e'(s')$ incident with a neighbor of $u$, say $e'(s')=xy$,
such that changing within $\rho'_{n-1}$ 
the value of $u$ from $0$ to $s'$
yields a function $\rho'_n$ with $\sigma'_n(x)=\sigma'_n(y)$,
where $\sigma'_n$ is derived from $\rho'_n$ in the obvious way.
As before,
the edge $e'(s')$ is unique for every $s'\in [\Delta^2-1]_0$ and 
$\{ e'(s'):s'\in [\Delta^2-1]_0\}$ is the set of all $\Delta^2$ distinct edges 
that are incident with neighbors of $u$.
For $i\in [\Delta]$ and $j\in [\Delta-1]$, 
let $s'_i$ be such that $e'(s'_i)=uv_i$
and 
let $s'_{i,j}$ be such that $e'(s'_{i,j})=v_iv_{i,j}$.

Note that 
\begin{eqnarray}
[\Delta^2-1]_0 &=& 
\{ s_i':i\in [\Delta]\}\cup \{ s'_{i,j}:i\in [\Delta]\mbox{ and }j\in [\Delta-1]\}\nonumber\\
&=&\{ s_i:i\in [\Delta]\}\cup \{ s_{i,j}:i\in [\Delta]\mbox{ and }j\in [\Delta-1]\}.\label{e6}
\end{eqnarray}
Since $\rho'_{n-1}$ and $\rho_{n-1}$ only differ at $v_1$,
if follows, for $\alpha=t'-t=\rho'_{n-1}(v_1)-\rho_{n-1}(v_1)$, that 
\begin{itemize}
\item $s'_1=s_1+\alpha$,
\item $s_i'=s_i-\frac{\alpha}{\Delta-1}$ for every $i\in [\Delta]\setminus \{ 1\}$, and
\item $s'_{1,j}=s_{1,j}-(\Delta-1)\alpha$ for every $j\in [\Delta-1]$.
\end{itemize}
Now, let $i\in [\Delta]\setminus \{ 1\}$ and $j\in [\Delta-1]$.
If 
$v_{i,j}$ is not adjacent to $v_1$, then $s'_{i,j}=s_{i,j}$,
and, if 
$v_{i,j}$ is adjacent to $v_1$, then $s'_{i,j}=s_{i,j}+\alpha$.
Note that 
$\sum\limits_{i=1}^{\Delta}(s_i'-s_i)=\alpha-\frac{\alpha}{\Delta-1}(\Delta-1)=0$.
Furthermore, if $v_{1,\ell}$ has exactly $k$ neighbors in $N_G(u)$, then 
the sum of $s_{i,j}'-s_{i,j}$
over all edges $v_iv_{i,j}$ with $v_{i,j}=v_{1,\ell}$ 
equals 
$-(\Delta-1)\alpha+(k-1)\alpha=-(\Delta-k)\alpha$.
Since $v_{1,1}$ is not adjacent to all vertices in $N_G(u)$, it follows that 
$$\sum\limits_{i=1}^\Delta \left(s_i'+\sum\limits_{j=1}^{\Delta-1}s'_{i,j}\right)
\not=\sum\limits_{i=1}^\Delta \left(s_i+\sum\limits_{j=1}^{\Delta-1}s_{i,j}\right).$$
This contradicts (\ref{e6}), which completes the proof.
\end{proof}
There is a weakness in the analysis of the greedy algorithm 
that we have not been able to overcome: As explained before (\ref{e5}),
at most $s_i^{(1)}+s_i^{(2)}$ of the $s_i^{(1)}+s_i^{(2)}+1$ elements 
in $\left[s_i^{(1)}+s_i^{(2)}\right]_0$ are excluded by the required conditions.
Now, the analysis of the greedy algorithm 
pessimistically assumes that the smallest non-excluded
element of that set is its maximum, which seems unlikely as least on average.

\end{document}